\documentclass[]{amsart}

\author{Matthew D. Kvalheim}

\address{Department of Mathematics and Statistics, University of Maryland, Baltimore County, Baltimore, MD, USA}

\email{kvalheim@umbc.edu}

\makeatletter
\@namedef{subjclassname@2020}{%
	\textup{2020} Mathematics Subject Classification}
\makeatother   % MK: overriding since I only have up to 2010 classifications in my version of amsart
\title%[Stabilizability of first-order within second-order systems]
[Stabilizability of first-order dynamics in second-order systems]{Stabilizability of first-order dynamics in second-order systems}
\subjclass[2020]{Primary 93D15; Secondary 70Q05, 37J25} 
\usepackage{import} % Use import instead of input; for pdf_tex in figure folder

\usepackage{cite}
\usepackage{amsmath}
\usepackage{amssymb}
\usepackage{stmaryrd} % for minnorm \llfloor, \rrfloor symbols
\usepackage{mathtools}
\usepackage{tikz-cd}
\usepackage{graphicx}
\usepackage{subfig}
\usepackage{tikz-cd}

\DeclareFontFamily{U}{mathx}{}
\DeclareFontShape{U}{mathx}{m}{n}{
	<5> <6> <7> <8> <9> <10>
	<10.95> <12> <14.4> <17.28> <20.74> <24.88>
	mathx10
}{}
\DeclareSymbolFont{mathx}{U}{mathx}{m}{n}
\DeclareMathAccent{\widecheck}{0}{mathx}{"71}

\newcommand{\concept}[1]{\textbf{#1}}

\newcommand{\N}{\mathbb{N}}

\newcommand{\R}{\mathbb{R}}

\newcommand{\slot}{\,\cdot\,} % Empty argument slot

\newcommand{\T}{T}
\newcommand{\Nor}{N}
\newcommand{\id}{\textnormal{id}}

\newcommand{\cfg}{Q}
\newcommand{\st}{M}
\newcommand{\att}{V}
\newcommand{\nbhd}{W}
\newcommand{\nbhdt}{\tilde{W}}
\newcommand{\rmo}{\kappa}
\newcommand{\spray}{S}
\newcommand{\rmt}{\nu}
\newcommand{\q}{q}
\newcommand{\cs}{F}
\newcommand{\ctrl}{U}
\newcommand{\cp}{\pi_{\ctrl}} % control space projection for use in section~\ref{subsec:underactuated-proof-prelims} only
\newcommand{\uc}{u}

\newcommand{\sect}{\Gamma}
\newcommand{\vf}{\mathfrak{X}}
\newcommand{\vo}{X}
\newcommand{\vt}{Y}

\newcommand{\vtt}{\tilde{Y}}
\newcommand{\adv}{\vt}
\newcommand{\dbn}{\mathcal{D}}
\newcommand{\vl}{\textnormal{vl}} % From p. 107 of Michor "topics..."
\newcommand{\V}{V}

\newcommand{\ip}[2]{\langle #1, #2 \rangle}
\DeclarePairedDelimiter\norm{\lVert}{\rVert}
\DeclarePairedDelimiter\minnorm{\llfloor}{\rrfloor}

\newcommand{\MK}[1]{[{\color{magenta}(M.D.K.) \color{cyan}\textsc{#1}}]}

\theoremstyle{definition}
\newtheorem{Def}{Definition}
\newtheorem{Lem}{Lemma}
\newtheorem{Th}{Theorem}

\newtheorem{Prop}{Proposition}
\newtheorem{Assump}{Assumption}
\newtheorem{Rem}{Remark}
\newtheorem{Ex}{Example}
\newtheorem*{AdvCond}{Adversary condition}

\newtheorem*{AdvThm}{Adversary theorem}

\usepackage{marvosym}
\usepackage[%hyperref
hypertexnames,%
citecolor=blue,%
colorlinks=true,%
linkcolor=red%
]{hyperref}
\usepackage[all]{hypcap}

\begin{document}

\begin{abstract}
We study whether second-order systems can be made to behave like prescribed first-order dynamical systems through feedback control.
More precisely, we study whether prescribed vector fields on compact smooth manifolds, viewed geometrically as sections of the tangent bundle, can be asymptotically stabilized in a strong sense by second-order control systems on the base manifold.
Our class of second-order systems includes most Lagrangian systems, and we obtain both positive and negative results.
The positive result asserts that, for fully actuated systems, the section corresponding to any smooth vector field can be made globally exponentially stable, normally hyperbolic, and more.
In particular, not only does each closed-loop solution asymptotically have the prescribed velocities, but it also converges to a trajectory of the first-order dynamics generated by the prescribed vector field at an exponential rate.
Thus, the closed-loop second-order system asymptotically reproduces the prescribed first-order dynamics.
In contrast, the negative result asserts that, for underactuated systems on manifolds with nonzero Euler characteristic, sections corresponding to ``almost all'' smooth vector fields cannot even be locally asymptotically stabilized.
This includes, in particular, all vector fields with only isolated zeros.
An example shows that the Euler characteristic assumption is necessary for the negative result.
\end{abstract}

\maketitle

\section{Introduction}
We study general second-order control systems
\begin{equation}\label{eq:riemannian-cs}
	\nabla_{\dot{\q}}\dot{\q} = \cs(\uc)
\end{equation}
on a smooth Riemannian manifold $\cfg$, where $\nabla$ is the Levi-Civita connection.
%See \cite{abraham1978foundations,bloch2015nonholonomic,bullo2005geometric}  for related standard details.
In the general framework here,  $\cs\colon \ctrl\to \T\cfg$ is a map making the diagram
\begin{equation}\label{eq:F-cd}
	\begin{tikzcd}
		\ctrl \arrow[r,"F"] \arrow[d,"\pi_{\ctrl}",swap] & \T\cfg \arrow[d,"\pi_{\T\cfg}"]\\
		 \T \cfg \arrow[r, "\pi_{\T\cfg}",swap] & \cfg
	\end{tikzcd}
\end{equation}
commute, where $\pi_{\T \cfg}$ is the tangent bundle projection and $\pi_{\ctrl}$  is the control space projection.
This framework extends a second-order version of Brockett's ``bundle picture of control'' \cite[p.~15]{brockett1977control}, \cite[sec.~4.6]{bloch2015nonholonomic}, and includes all regular Lagrangian control systems \cite[Def.~3.5.8]{abraham1978foundations} such as the standard class of ``manipulator equations'' in robotics with nondegenerate inertia tensor \cite[eq.~4.2.4]{murray1994mathematical}, written locally as
\begin{equation}\label{eq:manipulator-equations}
	M(\q)\ddot{\q} + C(\q,\dot{\q})\dot{\q} + G(\q) = \tau(\q,\dot{\q},\uc).
\end{equation}
\begin{Rem}\label{rem:no-redundant-expressions}
	Since we work globally in this paper, we do not write redundant expressions such as ``$\cs(\q,\dot{\q},\uc)$'' in \eqref{eq:riemannian-cs} or elsewhere, as $\uc\in \ctrl$  contains all information about $\dot{\q}=\pi_{\ctrl}(\uc)$, which in turn contains all information about $\q = \pi_{\T\cfg}(\dot{\q})$. 
\end{Rem}

This general framework is motivated by robotics \cite{koditschek2021robotics}, spacecraft \cite{leve2025spacecraft}, and other modern applications for which configuration spaces are non-Euclidean and/or the combined space of states and controls is not globally a product \cite{leve2025future}.
A number of recent works have employed similarly general frameworks for first-order systems \cite{kvalheim2022necessary,kvalheim2023obstructions, baryshnikov2023topological,kvalheim2025relationships,belabbas2025why,deSa2025bundles}.

For such general as well as classical second-order systems, a natural goal is to make the configuration $\q$ follow ``reference dynamics'' prescribed by a system of first-order ordinary differential equations
\begin{equation}\label{eq:first-order-autonomous-ode}
\dot{\q} = \vo(\q),
\end{equation}
where $\vo$ is a vector field on the configuration space $\cfg$.
Here ``reference dynamics'' should be contrasted with the distinct notion of  a ``reference trajectory'' $t\mapsto r(t)\in Q$ \cite{bullo1999tracking,lee2010geometric, welde2024almost}.
In seminal work, Koditschek showed that arbitrary first-order dynamics  \eqref{eq:first-order-autonomous-ode} can indeed be ``embedded'' within any ``fully actuated'' second-order system, at least in the classical Euclidean space context \cite[sec.~3.3]{koditschek1987adaptive}.
However, he also raised the important concern that the closed-loop second-order system might not asymptotically reproduce dynamical properties of the prescribed first-order system.  

The first main result of this paper, Theorem~\ref{th:naim-stabilization}, is based on a coordinate-free version of  the same \concept{Koditschek construction}.
It implies in particular that, if $\cfg$ is compact, $\pi_{\ctrl}$ is a smooth vector bundle, and $\cs$ is a smooth fiberwise surjective  linear map (see Proposition~\ref{prop:suff-cond-assump:naim-stabilization}), then the image $\vo(\cfg)\subset \T\cfg$ of an arbitrary smooth vector field $\vo$ on $\cfg$ can be made a globally exponentially stable and normally attracting invariant manifold (Definition~\ref{def:naim-bunching}) for the closed-loop dynamics on $\T \cfg$ resulting from substituting some smooth feedback law $\uc\colon \T \cfg \to \ctrl$ into \eqref{eq:riemannian-cs}.
Normal attractivity implies the crucial property that, not only does each closed-loop trajectory $t\mapsto \dot{\q}(t)$ converge to $\vo(\cfg)$, but also $t\mapsto \q(t)$ converges to a trajectory of \eqref{eq:first-order-autonomous-ode} at an exponential rate \cite[Thm~2]{fenichel1974asymptotic}, \cite[Thm~4.1]{hirsch1977invariant}.
Thus, the closed-loop second-order system asymptotically reproduces the prescribed first-order reference dynamics \eqref{eq:first-order-autonomous-ode}, resolving the aforementioned concern in our setting (see Remark~\ref{rem:koditschek}).
See Figure~\ref{fig:naim-stabilization} for an illustration.

While the first main result provides a positive conclusion for fully actuated systems, the second main result of this paper, Theorem~\ref{th:non-stabilizability}, provides a negative conclusion for underactuated systems.
It asserts in particular that, if $\cfg$ is compact with nonzero Euler characteristic and the image $\cs(\ctrl)\subset \T \cfg$ of $\cs$ is contained in a continuous regular distribution (tangent subbundle) of positive corank, then the image $\vo(\cfg)$ of ``almost every'' smooth vector field $\vo$ on $\cfg$ cannot even be made locally asymptotically stable by any feedback law $\uc\colon \T \cfg \to \ctrl$ such that $\cs\circ \uc$ is smooth or even locally Lipschitz.
Furthermore, the image $\cs(\ctrl)$ actually need only be contained in a continuous regular distribution over a small neighborhood of the zeros of $\vo$,  broadening the applicability of Theorem~\ref{th:non-stabilizability}.\footnote{For example, there are no globally-defined continuous regular distributions of positive corank on even-dimensional spheres \cite[problem~9-C]{milnor1974characteristic}, but Theorem~\ref{th:non-stabilizability} can still be applied to second-order control systems \eqref{eq:riemannian-cs}, \eqref{eq:F-cd} on such spheres by considering locally-defined distributions, as in Example~\ref{ex:non-stabilization}.\label{foot:local-underactuation-distributions}}
The result applies to a class of vector fields $\vo$ including all locally Lipschitz vector fields having only finitely many zeros (see Remark~\ref{rem:finite-zeros}).
(The set of all $C^k$ such vector fields contains an open and dense subset of the space of all $C^k$ vector fields on $\cfg$ with the $C^k$ topology for any $k\in \N_{\geq 1}\cup\{\infty\}$ \cite[Thm~2.3.2]{palis1982geometric}.)
Note that the Euler characteristic assumption is indeed necessary  (see Remark~\ref{rem:zero-euler-char-case} and Example~\ref{ex:euler-char-necessary}).

The proof of the second main result is based on a generalization and strengthening of Brockett's necessary condition for stabilizability of points in Euclidean spaces \cite[Thm~1.(iii)]{brockett1983asymptotic} to a necessary condition for stabilizability of general compact  sets in manifolds \cite[Thm~3.2]{kvalheim2022necessary} (see also \cite[Thm~1]{kvalheim2025relationships}).
See \cite{jongeneel2023topological} for a broad survey of such topological obstructions to stability and stabilization, and \cite{kvalheim2025relationships} for a narrower review.

The remainder of this paper is organized as follows.
Preliminary definitions are in section~\ref{sec:preliminaries}.
The (``positive'') first main result is stated and proved in section~\ref{sec:fully-actuated}.
The (``negative'') second main result is stated in section~\ref{sec:underactuated} and proved in section~\ref{sec:proof-of-th:non-stabilizability}.
A concluding discussion is in section~\ref{sec:discussion}.

\section{Preliminaries}\label{sec:preliminaries}
A \concept{section} of a map $\pi\colon A\to B$ is a right inverse of $\pi$, that is, a map $\sigma \colon B\to A$ satisfying $\pi\circ \sigma = \id_B$.
If $A$ and $B$ are topological spaces, we let $\sect^0(\pi)$ denote the set of $C^0$ (continuous) sections of $\pi$.
If $A$ and $B$ are smooth $(C^\infty)$ manifolds, we let $\sect^k(\pi)$ denote the set of $C^k$ sections of $\pi$ for $k\in \N_{\geq 0}\cup \{\infty\}$,  where a map is $C^k$ if it has $C^0$ partial derivatives of all orders less than $k+1$ in local coordinates.

Given any smooth manifold $M$, we let $\pi_{\T M}\colon \T M \to M$ denote the tangent bundle projection of $M$.
Given $V\subset M$ and $v\in V$, we let $\T_V M= \pi_{\T M}^{-1}(V)$ and $\T_v M= \T_{\{v\}}M$.
We let $\vf^k(M) = \sect^k(\pi_{\T M})$ denote the set of $C^k$ vector fields on $M$ for $k\in \N_{\geq 0}\cup \{\infty\}$.
Given a $C^1$ map $g\colon M\to W$ into another manifold $W$,  we denote by $\T g\colon \T M\to \T W$ the tangent map \cite[p.~18]{hirsch1994differential},  $\T_V g= \T g|_{\T_V M}\colon \T_V M \to \T_{g(V)} W$, and $\T_v g = \T_{\{v\}}g=\T g|_{\T_v M}\colon \T_vM\to \T_{g(v)}W$.

Throughout this paper, the \concept{configuration space} $\cfg$ is a compact smooth manifold with smooth Riemannian metric $\rmo$, and $\nabla$ is the associated Levi-Civita connection.
(However, less differentiability would suffice.)
%(If $k\in \N_{\geq 1}$ and $\cfg$ is a $C^k$ manifold, then $\cfg$ is $C^k$ diffeomorphic to a smooth manifold \cite[Thm~2.2.9]{hirsch1994differential}, so extra generality is available ``for free'' \MK{But what about the metric?}.)
The \concept{control space} $\ctrl$ is a topological space and the \concept{control space projection} $\pi_{\ctrl}\colon \ctrl \to \T\cfg$ is a $C^0$ map.
Given any subset $V\subset \T\cfg$ and point $v\in \T\cfg$, we let $\ctrl_V = \pi_{\ctrl}^{-1}(V)$ and $\ctrl_v = \ctrl_{\{v\}}$.

In addition to vector fields on $\cfg$, we will need to consider vector fields on $\T\cfg$ such as the \concept{geodesic spray} $\spray\in \vf^\infty(\T\cfg)$ of the Riemannian metric $\rmo$ \cite[sec.~22.6]{michor2008topics}, which is defined by the property that geodesics $\R\to \cfg$ for $\rmo$ are $\pi_{\T\cfg}$-projections of trajectories $\R\to \T\cfg$ of $\spray$.
Moreover, in addition to $\T\cfg$, we will need to consider the tangent bundle $\T^2 \cfg = \T(\T\cfg)$ of $\T\cfg$ with tangent bundle projection $\pi_{\T^2\cfg}\colon \T^2 \cfg \to \T \cfg$.
We denote by $\V(\T\cfg)$ the \concept{vertical bundle} $\ker \T {\pi_{\T\cfg}}\subset \T^2 \cfg$, 
\begin{equation*}\label{eq:fiber-product-version-of-whitney-sum}
	\T\cfg \times_{\cfg} \T\cfg =\{(v,w)\in \T\cfg \times \T\cfg\colon \pi_{\T\cfg}(v)=\pi_{\T\cfg}(w)\}
\end{equation*}
the fiber product of $\pi_{\T\cfg}$ with itself (equivalently, the pullback of $\pi_{\T\cfg}$ along itself, which coincides with the Whitney sum $\T\cfg\oplus \T\cfg$ as a manifold but not as a vector bundle), and  $\vl\colon \T\cfg \times_{\cfg} \T\cfg \to \V(\T\cfg)$ the \concept{vertical lift} given by 
\begin{equation}\label{eq:vertical-lift-defn}
\vl(v,w)=\frac{d}{dt}(v+tw)|_{t=0}
\end{equation}
for all $(v,w)\in \T\cfg \times_\cfg \T\cfg$ \cite[p.~107]{michor2008topics}.

\section{Stabilizability in the fully actuated case}\label{sec:fully-actuated}
\subsection{Normally attracting invariant manifolds}\label{subsec:fully-actuated-prelims}

In this subsection, we give precise definitions of $k$-normal attractivity and $k$-center bunching.
The former corresponds  to \cite[sec.~2.1]{eldering2018global}, \cite[Prop.~2.3]{hirsch1977invariant}, which is a natural version for flows of the classical definition of eventual relative $k$-normal hyperbolicity for diffeomorphisms in the special case of a zero unstable bundle \cite[sec.~1]{hirsch1977invariant} (see also \cite{fenichel1971persistence,eldering2013normally}).
The latter corresponds to a hypothesis in \cite[Cor.~2]{eldering2018global}, which in turn corresponds to a hypothesis in \cite[Thm~5]{fenichel1977asymptotic} reformulated in notation closer to \cite[Thm~B]{pugh1997holder} in the case $k=1$.
We first recall the ingredients.

Given a $C^1$ embedded submanifold $V$ of $M$ \cite[pp.~13, 21]{hirsch1994differential}, we let $\Nor V =  \T_V M / \T V$ denote the algebraic normal bundle of $V$ in $M$ \cite[p.~96]{hirsch1994differential} and  $\Nor_v V= \T_v M / \T_v V\subset \Nor V$ for $v\in V$.
If $g\colon M\to M$ is a $C^1$ map satisfying $g(V)\subset V$, then $\T g(\T V)\subset \T V$, so there is a canonical fiberwise linear map $\Nor  g\colon \Nor V \to \Nor V$.

Given a fiberwise linear map $L\colon E_1\to E_2$ between topological vector bundles $\pi^i \colon E^i\to B^i$ with $C^0$ fiberwise norms $\|\slot\|$ \cite[sec.~4.2]{hirsch1994differential}, let $E^i_b= (\pi^i)^{-1}(b)$ and
\begin{equation*}
	\norm{L|_{E^1_b}} = \max_{\norm{e}=1} \norm{L|_{E^1_b}(e)} \quad \text{and} \quad \minnorm{L|_{E^1_b}} = \min_{\norm{e}=1} \norm{L|_{E^1_b}(e)}
\end{equation*}
denote the induced norm and conorm of $L|_{E^1_b}$.

\begin{Def}\label{def:naim-bunching}
Let $V$ be a compact $C^1$ invariant manifold for a $C^1$ flow $\Phi$ on a smooth manifold $M$, and let $\Psi$ be the flow on $\Nor V $ given by $\Psi^t = \Nor \Phi^t$.
Fix any $C^0$ fiberwise norms $\|\slot\|$ on $\T V$ and $\Nor V$.
Given $k\in \N_{\geq 1}$, we say that $V$ is a \concept{$k$-normally attracting invariant manifold ($k$-NAIM)} if there are constants $a,b>0$ such that, for all $v\in V$ and integers $0\leq i \leq k$,
\begin{equation*}\label{eq:naim-cond}
\|\Psi^t|_{\Nor_v V}\| \leq b e^{-at}\minnorm{\T\Phi^t|_{\T_v V}}^i \quad \text{for all } t\geq 0.
\end{equation*}  
We say that $V$ is a \concept{normally attracting invariant manifold (NAIM)} if $V$ is a $1$-NAIM.
We say that a NAIM $V\subset M$ is \concept{$k$-center bunching} if there are constants $a,b>0$ such that, for all $v\in V$ and integers $0\leq i \leq k$,
\begin{equation*}\label{eq:center-bunching-cond}
	\norm{\T_v \Phi^t|_{\T_v V}}^i\|\Psi^t|_{\Nor_v V}\| \leq b e^{-at}\minnorm{\T_v \Phi^t|_{\T_v V}} \quad \text{for all } t\geq 0.
\end{equation*}  
\end{Def}

\begin{Rem}\label{rem:independence-of-norms}
	Any pair of $C^0$ fiberwise norms on a vector bundle over a compact base are uniformly equivalent, so Definition~\ref{def:naim-bunching} does not depend on the choice of norms.
\end{Rem}

\begin{Rem}\label{rem:center-bunching-open-condition}
If $k\in \N_{\geq 1}$,  $V$ is a $k$-NAIM for a $C^k$ flow $\Phi$, and $\Theta$ is another flow sufficiently close to $\Phi$ in the compact-open (weak) $C^k$ topology \cite[sec.~2.1]{hirsch1994differential}, then a classical fundamental result asserts that $\Theta$ has a unique $k$-NAIM $V'$ that is $C^k$-close to $V$ \cite[Thm~4.1(f)]{hirsch1977invariant}.	
Moreover, it is straightforward to show that $V$ is $k$-center bunching for $\Phi$ if and only if there is  $\tau>0$ such that, for all $v\in V$ and integers $0\leq i \leq k$,
\begin{equation}\label{eq:center-bunching-finite-time}
	\norm{\T_v \Phi^\tau|_{\T_v V}}^i\|\Psi^\tau|_{\Nor_v V}\| < \frac{1}{2}\minnorm{\T_v \Phi^\tau|_{\T_v V}},
\end{equation}  
and similarly for $V'$ and $\Theta$.
This  is clearly a $C^1$-open condition on flows, so the $k$-NAIM $V'$ for $\Theta$ will also be $k$-center bunching if $\Theta$ is sufficiently $C^k$-close to $\Phi$.
\end{Rem}

\subsection{Results}\label{subsec:fully-actuated-results}
Theorem~\ref{th:naim-stabilization} incorporates the following assumption.

\begin{Assump}\label{assump:naim-stabilization}
For any $C^k$ map $G\colon \T\cfg \to \T \cfg$ satisfying $G\circ \pi_{\T\cfg} = \pi_{\T\cfg}$, there is $\uc\in \sect^0(\pi_{\ctrl})$ such that $\cs\circ \uc = G$.
\end{Assump}

In particular, Assumption~\ref{assump:naim-stabilization} holds if $\pi_{\ctrl}$ is isomorphic to the pullback of the bundle $\pi_{\T \cfg}$ along itself \cite[p.~97]{hirsch1994differential}.
However, a more general sufficient condition is the following, where for any $\q\in \cfg$ and $v\in \T_\q \cfg$,  we let $\cs_v$ denote the restriction $\cs|_{\ctrl_v}\colon \ctrl_v\to \T_{\q}\cfg$, and $\cs_v^*, \cs_v^\dagger\colon \T_\q \cfg \to \ctrl_v$ respectively denote the adjoint (transpose) and Moore-Penrose pseudoinverse of $\cs_v$ with respect to the Riemannian metric $\rmo$ and some choice of fiberwise inner product on $\ctrl$ \cite[p.~95]{hirsch1994differential}.

\begin{Prop}\label{prop:suff-cond-assump:naim-stabilization}
If $\pi_{\ctrl}$ is a $C^k$ vector bundle and $\cs$ is a $C^k$ fiberwise surjective   linear map, then Assumption~\ref{assump:naim-stabilization} holds with $\uc \in \sect^k(\pi_{\ctrl})$ given by
\begin{equation}\label{eq:pseudo-inverse-formula-for-feedback}
\uc(v) = \cs_v^\dagger (G(v)) = \cs_v^* \circ (\cs_v \circ \cs_v^*)^{-1}(G(v))
\end{equation}
with respect to the metric $\rmo$ and any given $C^k$ fiberwise inner product on $\ctrl$.
\end{Prop}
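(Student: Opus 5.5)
We verify three things in turn: that the formula \eqref{eq:pseudo-inverse-formula-for-feedback} is well defined, that $\uc$ is a section of $\pi_{\ctrl}$ with $\cs\circ\uc = G$, and that $\uc$ is $C^k$.

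First, for each $v\in \T_\q\cfg$ with $\q=\pi_{\T\cfg}(v)$, the map $\cs_v\colon \ctrl_v\to \T_\q\cfg$ is linear and surjective by hypothesis. The target lies in $\T_\q\cfg$ because of the commutative diagram \eqref{eq:F-cd}. Hence $\cs_v^*\colon \T_\q\cfg\to\ctrl_v$ is injective, with image $(\ker \cs_v)^\perp$. It follows that $\cs_v\circ\cs_v^*$ is injective on $\T_\q\cfg$, since $\ip{\cs_v\cs_v^* w}{w} = \norm{\cs_v^* w}^2$. Being an endomorphism of a finite-dimensional space, it is therefore invertible. So the right-hand expression in \eqref{eq:pseudo-inverse-formula-for-feedback} makes sense. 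It coincides with the Moore--Penrose pseudoinverse, since it is a right inverse of $\cs_v$ with image orthogonal to $\ker\cs_v$, and this characterizes $\cs_v^\dagger$ for surjective $\cs_v$.

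Second, we use the hypothesis $G\circ\pi_{\T\cfg}=\pi_{\T\cfg}$ in the sense that $G(v)\in\T_\q\cfg$, the fiber over $\pi_{\T\cfg}(v)$. Then $\uc(v)\in\ctrl_v$, so $\pi_{\ctrl}\circ\uc=\id$. Moreover $\cs(\uc(v)) = \cs_v\cs_v^*(\cs_v\cs_v^*)^{-1}G(v) = G(v)$, which gives $\cs\circ\uc=G$.

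Third, for regularity, which is the only step needing care, we work in local trivializations. Over a chart neighborhood $O\subset\T\cfg$, we trivialize $\ctrl_O\cong O\times\R^m$ as a $C^k$ vector bundle. We trivialize $\T\cfg$ over $\pi_{\T\cfg}(O)$ via coordinates, which pull back to give $O\times\R^n$. In these trivializations, $\cs$ is represented by a $C^k$ matrix-valued function $A(v)\in\R^{n\times m}$. The fiber inner products on $\ctrl$ and $\T\cfg$ are represented by $C^k$ symmetric positive definite matrices $P(v)$ and $R(v)$ respectively ($R$ comes from the smooth metric $\rmo$ composed with $\pi_{\T\cfg}$). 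The adjoint is then $A^*=P^{-1}A^{\top}R$, which is $C^k$ because matrix inversion is smooth on invertible matrices. Likewise $(AA^*)^{-1}$ is $C^k$, being the composition of a $C^k$ map into invertible matrices with the smooth inversion map. Finally, $G$ is $C^k$ and is represented by a $C^k$ function $g(v)\in\R^n$. Therefore $\uc(v)=\bigl(v, A^*(AA^*)^{-1}g\bigr)$ is $C^k$ on $O$. Since such $O$ cover $\T\cfg$, we conclude $\uc\in\sect^k(\pi_{\ctrl})$, and in particular $\uc\in\sect^0(\pi_{\ctrl})$, as Assumption~\ref{assump:naim-stabilization} requires.

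The main obstacle here is only bookkeeping: making sure the trivializations are compatible so that the coefficient matrices are genuinely $C^k$, and that the inner-product matrices enter the adjoint correctly.
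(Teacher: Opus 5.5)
Your proof is correct and follows the same route as the paper. Both check that $\uc(v)\in\ctrl_v$, so $\pi_{\ctrl}\circ\uc=\id$, and both compute $\cs\circ\uc(v)=(\cs_v\circ\cs_v^*)\circ(\cs_v\circ\cs_v^*)^{-1}(G(v))=G(v)$. You also justify two points the paper leaves implicit, which it sums up as ``clearly $C^k$'': that $\cs_v\circ\cs_v^*$ is invertible, and, via local trivializations, that $\uc$ is $C^k$.
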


\begin{proof}
The rightmost expression in \eqref{eq:pseudo-inverse-formula-for-feedback}	is clearly $C^k$ in $v$, and $\pi_{\ctrl}\circ \uc=\id_{\T\cfg}$ since $ \cs_v^* \circ (\cs_v \circ \cs_v^*)^{-1}$ is a map $\T_{\pi_{\T\cfg}(v)} \cfg\to \ctrl_v$, so $\uc\in \sect^k(\pi_{\ctrl})$.
And for any $v\in \T\cfg$, 
\begin{equation*}
	\begin{split}
		\cs\circ \uc(v)	&=  (\cs_v \circ \cs_v^*) \circ (\cs_v \circ \cs_v^*)^{-1}(G(v))\\
		&= G(v).
	\end{split}
\end{equation*}
\end{proof}

Proposition~\ref{prop:suff-cond-assump:naim-stabilization} and Remark~\ref{rem:no-redundant-expressions} should be kept in mind for the following statement of the first main result, a stabilization result with additional performance guarantees for fully actuated systems.\footnote{Here $\rmo^\flat, \rmt^\flat\colon \T\cfg \to \T^*\cfg$ and $\rmo^\sharp, \rmt^\sharp\colon \T^*\cfg\to \T\cfg$ are the standard \concept{musical isomorphisms} with respect to the  metrics $\rmo$ and $\rmt$ \cite[p.~128]{abraham1978foundations}.}
In particular, when the hypotheses of Proposition~\ref{prop:suff-cond-assump:naim-stabilization} hold, it provides the explicit feedback law \eqref{eq:pseudo-inverse-formula-for-feedback} satisfying \eqref{eq:kod-feedback}.
\begin{Th}\label{th:naim-stabilization}
Fix $k\in \N_{\geq 1}$ and let $\cs$ satisfy Assumption~\ref{assump:naim-stabilization}.
Then for any $\vo\in \vf^k(\cfg)$, $C^k$ Riemannian metric $\rmt$ on $\cfg$, and $\varepsilon > 0$, the closed-loop $C^k$ dynamics determined by
\begin{equation}\label{eq:riemannian-cs-closed-loop-epsilon}
		\nabla_{\dot{\q}}\dot{\q} = \cs\circ\uc_\varepsilon(\dot{\q})
\end{equation}
for any feedback law $u_\varepsilon\in \sect^0(\pi_{\ctrl})$ satisfying
\begin{equation}\label{eq:kod-feedback}
F\circ u_\varepsilon(v)= \nabla_v\vo - \frac{1}{\varepsilon}\rmo^\sharp\circ \rmt^\flat(v-\vo\circ \pi_{\T\cfg}(v))
\end{equation}
are complete and have $\vo(\cfg)\subset \T\cfg$ as a globally exponentially stable invariant manifold.
Moreover, $\vo(\cfg)$ is a $k$-NAIM and  $k$-center bunching for all sufficiently small $\varepsilon > 0$.
\end{Th}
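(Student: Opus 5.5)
The plan is to work with the ``error'' variable $e = v - \vo\circ\pi_{\T\cfg}(v)\in \T_{\q}\cfg$, where $\q=\pi_{\T\cfg}(v)$. Along any closed-loop solution $t\mapsto \dot{\q}(t)$, define $e(t) = \dot{\q}(t) - \vo(\q(t))$, a vector field along the curve $\q(t)$. By the closed-loop equation \eqref{eq:riemannian-cs-closed-loop-epsilon} and the feedback condition \eqref{eq:kod-feedback},
\begin{equation*}
\nabla_{\dot{\q}} e = \nabla_{\dot{\q}}\dot{\q} - \nabla_{\dot{\q}}\vo = -\tfrac{1}{\varepsilon}\rmo^\sharp\rmt^\flat(e).
\end{equation*}
Here $\nabla_{\dot{\q}}\vo$ along the curve equals $\nabla_v \vo$ evaluated at $v=\dot{\q}$, which is the key cancellation in the Koditschek construction.

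Next I will compute $\frac{d}{dt}\rmt(e,e)$. The Levi-Civita connection is that of $\rmo$, not $\rmt$, so I will instead use the energy $\rmo(e,e)$. Its derivative is
\begin{equation*}
\frac{d}{dt}\rmo(e,e) = 2\rmo(\nabla_{\dot{\q}}e,e) = -\tfrac{2}{\varepsilon}\rmt(e,e).
\end{equation*}
Since $\cfg$ is compact, $\rmt \ge c\,\rmo$ for some $c>0$. This gives $\rmo(e,e)(t)\le e^{-2ct/\varepsilon}\rmo(e,e)(0)$, so $\vo(\cfg)=\{e=0\}$ is invariant and globally exponentially stable, with rate of order $1/\varepsilon$.

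For completeness, $\|e\|$ is nonincreasing and $\vo$ is bounded on compact $\cfg$. Hence $\|\dot{\q}\|$ stays bounded, so solutions remain in a compact subset of $\T\cfg$ and exist for all time. The closed-loop vector field on $\T\cfg$ is $\spray + \vl(v,\cs\circ \uc_\varepsilon(v))$, and the right-hand side of \eqref{eq:kod-feedback} is $C^{k-1}$ in $v$ because it involves $\nabla\vo$. I will need to note that the dynamics is nonetheless $C^k$ as claimed. One route is to write it in the coordinates $(\q,e)$ as $\dot{\q} = \vo(\q)+e$ with the linear $e$-equation above, or to accept the regularity as stated; I would check this carefully.

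For the $k$-NAIM and $k$-center bunching claims, I will use the splitting $\T\cfg\cong$ (graph of $\vo$) $\times$ (fiber variable $e$). On $V=\vo(\cfg)$ the tangential dynamics is conjugate, via $\pi_{\T\cfg}$, to the flow of $\vo$. This flow is independent of $\varepsilon$, so by compactness $\|\T\Phi^t|_{\T V}\|\le Ce^{\lambda t}$ and $\minnorm{\T\Phi^t|_{\T V}}\ge C^{-1}e^{-\lambda t}$ with $C,\lambda$ independent of $\varepsilon$. The normal bundle $\Nor V$ is identified with the vertical directions $\{e\}$. Linearizing the $e$-equation at $e=0$ gives the normal flow $\Psi^t$, which is parallel-transport-like evolution with damping $-\tfrac1\varepsilon\rmo^\sharp\rmt^\flat$. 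The same energy estimate applied to the linearization, together with a bounded distortion coming from the $\rmo$-norm comparison and the frame change, gives $\|\Psi^t\|\le b e^{-ct/\varepsilon}$ with $b$ independent of $\varepsilon$.

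Choosing $\varepsilon$ so small that $c/\varepsilon > (k+1)\lambda + 1$ then yields both inequalities in Definition~\ref{def:naim-bunching} for all $0\le i\le k$. The main obstacle is obtaining the normal estimate with constants uniform in $\varepsilon$. The tangential part $\nabla_v\vo$ and the curvature-type terms from linearizing $\nabla$ are $O(1)$, and they couple into the normal equation. I would handle this by working in the triangular $(\q,e)$ coordinates, where the normal linearization is exactly the $e$-equation along base trajectories. If that is too delicate, a fallback is to rescale time, view the $O(1)$ terms as a perturbation of the dominant $-\tfrac1\varepsilon$ term, and apply Gronwall.
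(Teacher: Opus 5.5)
Your proposal is correct. Its first half (the error $e=\dot q-X(q)$, the identity $\frac{d}{dt}\kappa(e,e)=-\frac{2}{\varepsilon}\nu(e,e)$, and global exponential stability) follows the paper's argument. For the $k$-NAIM and $k$-center bunching claims, however, you take a genuinely different route.

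The paper argues perturbatively. It passes to $\tilde Y_\varepsilon=\varepsilon Y_\varepsilon$, which extends to $\varepsilon=0$. For $\tilde Y_0$, the set $X(Q)$ is a manifold of equilibria with $\dim Q$ negative normal eigenvalues, hence trivially $k$-NAIM and $k$-center bunching. The paper then invokes Hirsch--Pugh--Shub persistence and the $C^1$-openness of center bunching (Remark~\ref{rem:center-bunching-open-condition}). Implicitly, it uses uniqueness of the persisted manifold to identify it with the still-invariant $X(Q)$.

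You instead compute the rates directly. The flow on $X(Q)$ is conjugate via $\pi_{TQ}$ to $\Phi_X$, so its tangential rates $e^{\pm\lambda t}$ are independent of $\varepsilon$. In the triangular $(q,e)$ coordinates, the normal flow is exactly $\nabla_{X}\eta=-\frac1\varepsilon\kappa^\sharp\nu^\flat\eta$ along $\Phi_X$-orbits. Two facts give this: when differentiating in the initial condition, the curvature term $R(\partial_s q,\partial_t q)e$ vanishes at $e=0$; and the coupling $\dot q=X(q)+e$ only feeds the off-diagonal block, which the quotient $NV$ discards. The same energy estimate then gives $\|\Psi^t\|\le e^{-ct/\varepsilon}$ with $b=1$ in the $\kappa$-norm.

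Your route avoids the persistence machinery entirely and gives an explicit threshold such as $c/\varepsilon>(k+1)\lambda+1$. It also directly justifies the claim in Example~\ref{ex:naim-stabilization} that every $\varepsilon>0$ works when $\Phi_X$ is isometric (take $\lambda=0$), which the perturbative argument does not give. The paper's route, in exchange, is very short given the cited theory. It also frames the result as a singular perturbation of a normally hyperbolic manifold of equilibria.

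Two minor points:

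\textbf{Backward completeness.} Monotonicity of $\|e\|$ only gives forward completeness. Backward completeness is also asserted (cf. Remark~\ref{rem:koditschek}). For it you need the upper bound $\nu\le C\kappa$, which gives $\frac{d}{dt}\kappa(e,e)\ge-\frac{2C}{\varepsilon}\kappa(e,e)$, as the paper uses.

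\textbf{Regularity.} Your worry here is well founded. Since $X$ is only $C^k$, the map $v\mapsto\nabla_vX$, and hence $Y_\varepsilon$, is only $C^{k-1}$, whereas the paper's proof asserts $Y_\varepsilon$ is $C^k$. Your proposed fix works. Conjugating by the $C^k$ diffeomorphism $v\mapsto v-X(\pi_{TQ}(v))$ turns the dynamics into the $C^k$ vector field $e\mapsto \mathrm{hor}_e\bigl(X(\pi_{TQ}(e))+e\bigr)-\frac1\varepsilon\mathrm{vl}(e,\kappa^\sharp\nu^\flat e)$, where $\mathrm{hor}_e$ is the Levi-Civita horizontal lift. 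Its flow is therefore $C^k$. The same conjugation is also what makes the paper's $C^k$-convergence step $\tilde Y_\varepsilon\to\tilde Y_0$ rigorous.
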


\begin{Rem}\label{rem:koditschek}
Remarkably, the feedback law \eqref{eq:kod-feedback} is simply a coordinate-free version of the classical Koditschek construction for Euclidean spaces \cite[eq.~(20)]{koditschek1987adaptive}.
After introducing his construction, Koditschek articulated some important concerns (here $\mathcal{R}$ corresponds to our $\vo(\cfg)$) \cite[sec.~3.3]{koditschek1987adaptive}:
\begin{quote}
	``Unfortunately, this result is much too weak to be useful. 
	To begin with there is the assumption of completeness---i.e. that no finite escape trajectories result from the application of the control law.
	More fundamentally, the mere guarantee that $\mathcal{R}$ is a globally attracting set does not afford the conclusion that the behavior of trajectories starting away from $\mathcal{R}$ end up behaving at all like those which do originate in $\mathcal{R}$.
	In particular, if...[the dynamics]...is known to induce a positive limit set $\Omega\subset \mathcal{R}$, it is by no means clear that $\Omega$ is the positive limit set of the flow on the entire space.''
\end{quote}
Theorem~\ref{th:naim-stabilization} completely addresses these concerns in our setting.
To explain, let $\Phi_\varepsilon$ be the $C^k$ flow on $\T\cfg$ determined by \eqref{eq:riemannian-cs-closed-loop-epsilon}, \eqref{eq:kod-feedback} and $\Phi_\vo$ be the $C^k$ flow of $\vo$ on $Q$.
First, Theorem~\ref{th:naim-stabilization} asserts that $\Phi_\varepsilon$ is (both forward and backward) complete for any $\varepsilon > 0$, resolving the completeness concern.
Next, let $\varepsilon > 0$ be small enough as in the final sentence of Theorem~\ref{th:naim-stabilization}.
Then by \cite[Cor.~2]{eldering2018global}, there is a $C^{k-1}$ diffeomorphism  $H_\varepsilon\colon \T\cfg \to \T \cfg$ conjugating $\Phi_\varepsilon$ to a $C^{k-1}$ flow $\Theta_\varepsilon$ on $\T\cfg$ covering $\Phi_\vo$ such that $0_{\T\cfg}$ is globally exponentially stable for $\Theta_\varepsilon$.
In particular, 
\begin{equation*}
\begin{tikzcd}
	\T\cfg \arrow[r,"H_\varepsilon"] & \T\cfg \arrow[r,"\pi_{\T \cfg}"] & \cfg\\
	\T \cfg \arrow[r,"H_\varepsilon",swap] \arrow[u,"\Phi_\varepsilon^t"] & \T \cfg \arrow[r,"\pi_{\T \cfg}",swap] \arrow[u,"\Theta_\varepsilon^t"] & \cfg \arrow[u,"\Phi_X^t",swap]
\end{tikzcd}
\end{equation*}
is a commutative diagram for all $t\in \R$.
\iffalse
 fiber bundle $P\colon \T \cfg \to \cfg$ such that $P(\vo(\q))=\q$ for all $q\in \cfg$ and the diagram
\begin{equation*}\label{eq:asy-phase}
	\begin{tikzcd}
		\T \cfg \arrow[d,"P"] \arrow[r,"\Phi_\varepsilon^t"] & \T \cfg \arrow[d,"P"]\\
		\cfg \arrow[r,"\Phi_\vo^t",swap] & \cfg 
	\end{tikzcd}
\end{equation*}
commutes for all $t\in \R$ \cite[Cor.~2]{eldering2018global}.
\fi
It follows readily that, with respect to the natural diffeomorphic identification $\vo\colon \cfg\to \vo(\cfg)$, each $\Phi_\varepsilon$-trajectory asymptotically coalesces with some $\Phi_\vo$-trajectory at an exponential rate.
This implies that all positive limit sets and, e.g., the chain recurrent sets of $\Phi_\vo$ and $\Phi_\varepsilon$ coincide under the natural identification.
This is illustrated concretely by Example~\ref{ex:naim-stabilization} and Figure~\ref{fig:naim-stabilization}.
Of course, we have the technical advantage that $\cfg$ is compact, unlike in  Koditschek's Euclidean space setting.
\end{Rem}

\begin{proof}
First, since $\cfg$ is compact, $v\mapsto \rmt(v,v)$ attains a minimum $c > 0$ and maximum $C > 0$ on the $\rmo$-unit sphere bundle in $\T \cfg$, so for all $v\in \T\cfg$ we have
\begin{equation}\label{eq:norm-equivalence}
c\rmo(v,v) \leq \rmt(v,v) \leq C \rmo(v,v).
\end{equation}	
	
Let $t\mapsto \q(t)\in \cfg$ be any solution of \eqref{eq:riemannian-cs-closed-loop-epsilon}, \eqref{eq:kod-feedback}.
Define $y(t)=\dot{\q}(t)-\vo(\q(t))$.
Then \eqref{eq:kod-feedback} implies that
\begin{equation}\label{eq:nabla-qdot-y}
	\begin{split}
	\nabla_{\dot{\q}}y &= \cs \circ \uc_\varepsilon(\dot{\q})-\nabla_{\dot{\q}}\vo = - \frac{1}{\varepsilon}\rmo^\sharp\circ \rmt^\flat(\dot{\q}-\vo(q))\\
	&= - \frac{1}{\varepsilon}\rmo^\sharp\circ \rmt^\flat(y),
\end{split}
\end{equation}
so differentiating $t\mapsto \rmo(y(t),y(t))$ yields 
\begin{align*}
	\frac{d}{dt}\rmo(y,y)&= 2\rmo(\nabla_{\dot{\q}}y,y)= - \frac{2}{\varepsilon}\rmo(\rmo^\sharp\circ \rmt^\flat(y),y )= -\frac{2}{\varepsilon} \rmt(y,y).
\end{align*}
This and \eqref{eq:norm-equivalence} yield
\begin{equation*}
 -\frac{2C}{\varepsilon} \rmo(y,y) \leq	\frac{d}{dt}\rmo(y,y)  \leq -\frac{2c}{\varepsilon} \rmo(y,y),
\end{equation*}
so for all $t$ in the domain of $t\mapsto \dot{q}(t)$ we have 
\begin{equation*}
e^{-(2C/\varepsilon)t}\rmo(y_0,y_0)\leq 	\rmo(y(t),y(t))\leq e^{-(2c/\varepsilon)t}\rmo(y_0,y_0),
\end{equation*}
where $y_0=y(0)$.
Since $v\mapsto \rmo(v,v)$ has compact sublevel sets, it follows that the dynamics of \eqref{eq:riemannian-cs-closed-loop-epsilon}, \eqref{eq:kod-feedback} are complete and  $\vo(\cfg)$ is a globally exponentially stable invariant manifold.

The dynamics of \eqref{eq:riemannian-cs-closed-loop-epsilon}, \eqref{eq:kod-feedback} is the flow of the vector field $\vt_\varepsilon\in \vo^k(\T\cfg)$ given by
\begin{equation*}
	\vt_\varepsilon(v) = \spray(v) + \vl(v,\nabla_v \vo) - \frac{1}{\varepsilon}\vl(v,\rmo^\sharp\circ \rmt^\flat(v-\vo\circ \pi_{\T\cfg}(v))),
\end{equation*}
where $\spray$ is the geodesic spray of $\rmo$ and $\vl$ is  the vertical lift (section~\ref{sec:preliminaries}). 

Define a ``regularized'' vector field $\tilde{\vt}_\varepsilon = \varepsilon \vt_\varepsilon \in \vo^k(\T\cfg)$.
This has the nice property that, while $(\varepsilon, v)\mapsto \vt_\varepsilon(v)$ is well-defined and $C^k$ only on $(\R\setminus \{0\})\times \T\cfg$, $(\varepsilon, v)\mapsto \tilde{\vt}_\varepsilon(v)$ is well-defined and $C^k$ on all of $\R\times \T\cfg$.
Moreover, trajectories $t\mapsto v(t)$ of $\tilde{\vt}_\varepsilon$ satisfy $v(t)=\dot{q}(\varepsilon t)$, where $t\mapsto \dot{q}(t)$ is the  trajectory of $\vt_\varepsilon$ satisfying $\dot{q}(0)=v(0)$.
Thus, for any $\varepsilon > 0$, $\vo(\cfg)$ is a $k$-NAIM and $k$-center bunching for $\vt_\varepsilon$ if and only if it is the same for $\tilde{\vt}_\varepsilon$.

Note that $\vo(\cfg)$ is the set of zeros of the vector field 
\begin{equation*}
	\tilde{\vt}_0(v) = -\vl(v,\rmo^\sharp\circ \rmt^\flat(v-\vo\circ \pi_{\T\cfg}(v))).
\end{equation*}
Since $\rmo,\rmt$ are given locally by symmetric positive definite matrices, it readily follows that the linearization of $\tilde{\vt}_0$ at any such zero has $\dim \cfg$ zero eigenvalues and $\dim \cfg$ negative real eigenvalues.
Thus, $\vo(\cfg)$ is a $k$-NAIM and $k$-center bunching for $\tilde{\vt}_0$. %In fact $\infty$-NAIM and $\infty$-center bunching, but I don't need that, and those aren't open conditions anyway.
Since $\tilde{\vt}_\varepsilon \to \tilde{\vt}_0$ in the compact-open $C^k$ topology as $\varepsilon \to 0$, it follows that, for all sufficiently small $\varepsilon > 0$, $\vo(\cfg)$ is a $k$-NAIM and $k$-center bunching (see Remark~\ref{rem:center-bunching-open-condition}) for the flow of $\tilde{\vt}_\varepsilon$ and hence also the flow of $\vt_\varepsilon$, that is, the dynamics determined by \eqref{eq:riemannian-cs-closed-loop-epsilon}, \eqref{eq:kod-feedback}.
\end{proof}

\iffalse
\begin{Rem}\label{rem:linearizability}
	We close this section by noting a connection with linearizing, or ``Koopman'', embeddings of nonlinear systems \cite{liu2023non,belabbas2023sufficient,arathoon2023koopman,ko2024minimum,liu2025properties,kvalheim2025global}.
	In the notation of Remark~\ref{rem:koditschek}, $\vo \circ \pi_{\T\cfg}\circ H_\varepsilon$ is a $C^{k-1}$ \concept{asymptotic phase map} in the sense of \cite[sec.~2.3--2.4]{kvalheim2023linearizability}.
	By \cite[Thm~4]{kvalheim2023linearizability}, the existence of a $C^{k-1}$ asymptotic phase map is necessary for the existence of a $C^{k-1}$ embedding $h\colon \T\cfg\to \R^n$ that linearizes the dynamics of \eqref{eq:riemannian-cs-closed-loop-epsilon}, \eqref{eq:kod-feedback}, i.e., $y=h(\dot{q})$ satisfies
	$$\dot{y}=Ay$$
	for some $n\in \N$ and $A\in \R^{n\times n}$. 
	Moreover, the existence of a $C^0$ asymptotic phase map is necessary even for the existence of an injective $C^0$ map that  linearizes the dynamics, and any such linearizing map is automatically a proper $C^0$ embedding \cite[Thm~3]{kvalheim2023linearizability}.
\end{Rem}
\fi

\begin{Ex}\label{ex:naim-stabilization}
Let $\cfg$ be the unit sphere $S^2\subset \R^3$ with Riemannian metric $\rmo = \rmt$ given by the round metric, namely, the restriction of the Euclidean inner product $\ip{\slot}{\slot}$ to $T S^2$.
Viewing vector fields on $S^2$ as maps $S^2\to \R^3$ with the standard constraint \cite[p.~50]{guillemin1974differential}, let  $\vo\in \vf^\infty(S^2)$ be given by 
\begin{equation}\label{eq:ex:naim-stabilization-X-def}
	X(q_1,q_2,q_3) = (-q_2, q_1,0).
\end{equation}
Let $\Pi\colon \T_{S^2}\R^3\to \T S^2$ be the fiberwise orthogonal projection 
\begin{equation*}%\label{eq:ex:naim-stabilization-Pi-def}
	\Pi(w) = w-\ip{w}{q}q
\end{equation*}
for $q\in S^2$ and $w\in T_q \R^3$.
Then the Levi-Civita connection $\nabla$ for $\rmo$ satisfies
\begin{equation*}%\label{eq:ex:naim-stabilization-nabla-v-X}
	\begin{split}
	\nabla_v X &= \Pi(\T X(v)) \\
	&= D_q X \cdot v - \ip{D_q X\cdot v}{q}q
	\end{split}
\end{equation*}
for $q\in S^2$ and $v\in T_q S^2$, where $D_q X \in \R^{3\times 3}$ is the Jacobian matrix of the map $\R^3\to \R^3$ defined by the right side of \eqref{eq:ex:naim-stabilization-X-def}, ``$\cdot$'' denotes matrix multiplication, and $q,v$ are viewed as column vectors.
Similarly,
\begin{equation*}%\label{eq:ex:naim-stabilization-nabla-qdot-qdot}
	\begin{split}
		\nabla_{\dot{q}} \dot{q} &= \Pi(\ddot{q}) = \ddot{q}-\ip{\ddot{q}}{q}q\\
		& = \ddot{q} + \ip{\dot{q}}{\dot{q}}q
	\end{split}
\end{equation*} 
for any $C^2$ curve $t\mapsto q(t)\in S^2$, where the third equality follows from twice differentiating the constant curve $t\mapsto \ip{q(t)}{q(t)}\equiv 1$ using the product rule.
Thus, the closed-loop dynamics \eqref{eq:riemannian-cs-closed-loop-epsilon}, \eqref{eq:kod-feedback} of Theorem~\ref{th:naim-stabilization} take the concrete form
\begin{equation}\label{eq:ex:naim-stabilization-closed-loop}
	\begin{split}
		\ddot{q} &= D_q X \cdot \dot{q} - \ip{D_q X\cdot \dot{q}}{q}q- \ip{\dot{q}}{\dot{q}}q - \frac{1}{\varepsilon}(\dot{q}-X(q)).
	\end{split}
\end{equation}
Note that, since the first-order dynamics have no hyperbolicity, $\vo(\cfg)$ is a globally exponentially stable $\infty$-NAIM and  $\infty$-center bunching ($k$-NAIM and $k$-center bunching for every finite $k$) for the second-order system \eqref{eq:ex:naim-stabilization-closed-loop} for \emph{any}  $\varepsilon > 0$.
Taking $\varepsilon = 1.2$, Figure~\ref{fig:naim-stabilization} shows solutions $t\mapsto q(t)$ for several initial conditions $(q(0),\dot{q}(0))$ converging to corresponding trajectories of the first-order dynamics generated by the vector field  \eqref{eq:ex:naim-stabilization-X-def}, as guaranteed by normal attractivity (see Remark~\ref{rem:koditschek}).
\end{Ex}

\begin{figure}
	\centering
	\includegraphics[width=0.5\linewidth]{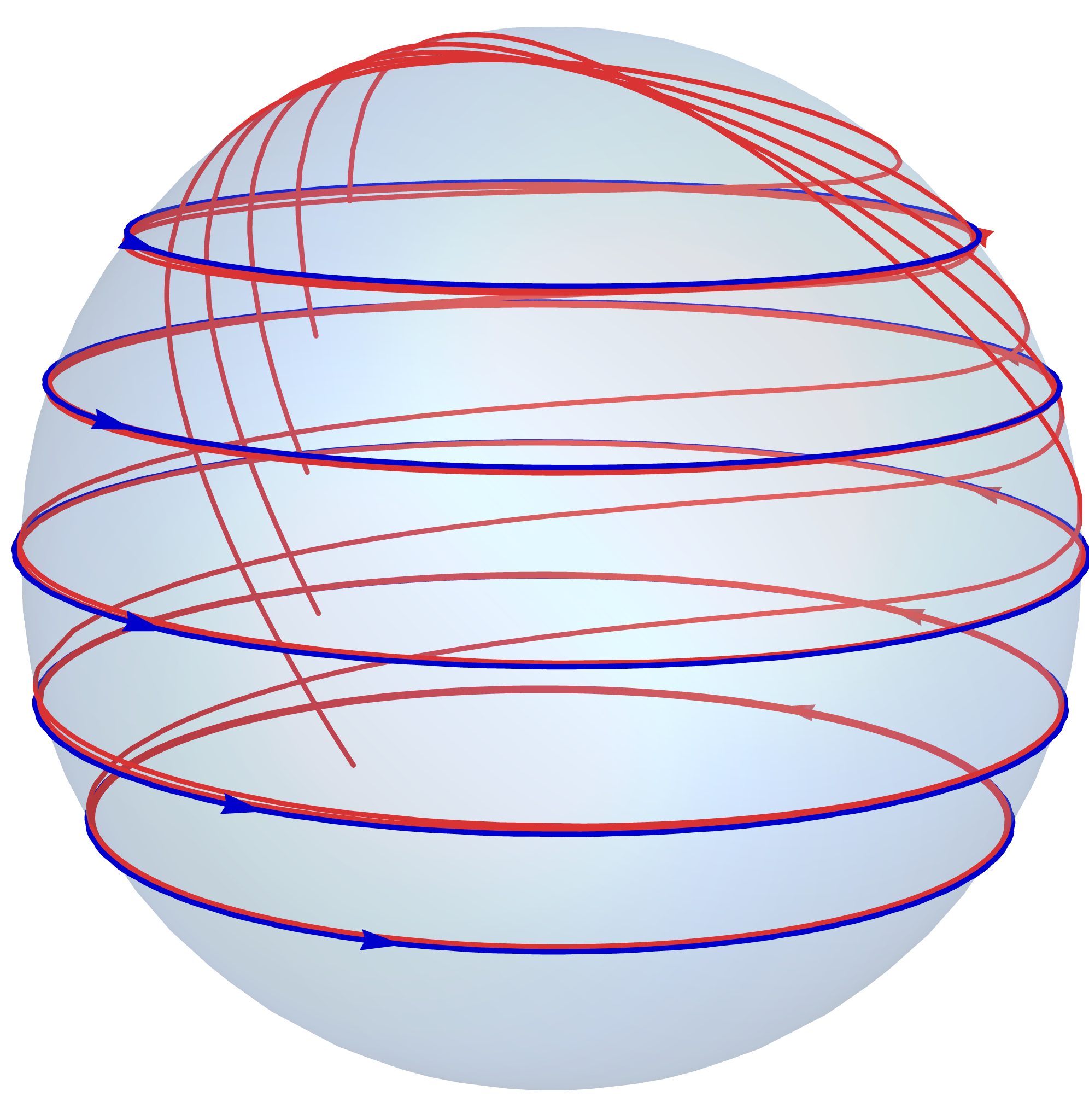}\caption{Illustration of Example~\ref{ex:naim-stabilization}. Shown in blue are several trajectories of the first-order reference dynamics generated by the vector field \eqref{eq:ex:naim-stabilization-X-def} on $\cfg = S^2$.
	Shown in red are several solutions to the closed-loop second-order system \eqref{eq:ex:naim-stabilization-closed-loop} for $\varepsilon = 1.2$.
	As explained in Example~\ref{ex:naim-stabilization}, here $\vo(S^2)\subset \T S^2$ is an $\infty$-NAIM and $\infty$-center bunching for \emph{any} value of $\varepsilon > 0$.
	Indeed, each red solution exponentially converges to a blue trajectory, so the closed-loop second-order system asymptotically reproduces the prescribed first-order reference dynamics, as guaranteed by NAIM theory (see Remark~\ref{rem:koditschek}).
	} \label{fig:naim-stabilization}
\end{figure}	

\section{Non-stabilizability in the underactuated case}\label{sec:underactuated}

Theorem~\ref{th:non-stabilizability} incorporates the following assumption.

\begin{Assump}\label{assump:non-stabilization}
There is a neighborhood $\nbhd\subset \cfg$ of $\vo^{-1}(0_{\T\cfg})$, a $C^0$ regular distribution $\dbn$ on $\nbhd$, and $\vt\in \vf^0(\nbhd)$ such that $\cs(\ctrl_{\T_\nbhd \cfg})\subset \dbn$ and $\vt(\nbhd)\cap \dbn = \varnothing$.
\end{Assump}

\begin{Rem}\label{rem:finite-zeros}
If $\vo^{-1}(0_{\T\cfg})$ is finite and there is a $C^0$ regular distribution $\dbn'$ of positive corank on a neighborhood $\nbhd'\subset \cfg$ of $\vo^{-1}(0_{\T\cfg})$ such that $\cs(\ctrl_{\T_{\nbhd'} \cfg})\subset \dbn'$, then Assumption~\ref{assump:non-stabilization} holds for some $\nbhd\subset \nbhd'$, $\vt\in \vf^0(\nbhd)$, and $\dbn=\dbn'|_W$.
\end{Rem}

Remarks~\ref{rem:no-redundant-expressions}, \ref{rem:finite-zeros} should be kept in mind for the following statement of the second main result, a non-stabilizability result for underactuated systems.
For the statement, we say that \eqref{eq:riemannian-cs-closed-loop} is \concept{uniquely integrable} if there is a unique maximal trajectory $t\mapsto \q(t)$ satisfying $\dot{\q}(0)=v_0$ for any $v_0\in \T\cfg$.

\begin{Rem}\label{rem:uniquely-integrable-second-order}
	By the Picard-Lindel\"of theorem, \eqref{eq:riemannian-cs-closed-loop} is automatically uniquely integrable if $\cs\circ \uc$ is locally Lipschitz (e.g., $\ctrl$ is a smooth manifold and $u$, $\cs$ are locally Lipschitz).
\end{Rem}

\begin{Th}\label{th:non-stabilizability}
Fix any $\vo\in \vf^0(\cfg)$.
Assume that the Euler characteristic of $\cfg$ is nonzero and Assumption~\ref{assump:non-stabilization} holds.
Then for any $\uc\in \sect^0(\pi_{\ctrl})$ such that 
\begin{equation}\label{eq:riemannian-cs-closed-loop}
		\nabla_{\dot{\q}}\dot{\q} = \cs\circ\uc(\dot{\q})
\end{equation}
is uniquely integrable, $\vo(\cfg)$ is not an asymptotically stable invariant set for the dynamics determined by \eqref{eq:riemannian-cs-closed-loop}.
\end{Th}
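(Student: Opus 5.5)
Argue by contradiction: suppose $\vo(\cfg)$ is an asymptotically stable invariant set for the closed-loop dynamics \eqref{eq:riemannian-cs-closed-loop}. The closed loop is the flow of the continuous vector field
\[
\vt_\uc(v) = \spray(v) + \vl(v,\cs\circ\uc(v))
\]
on $\T\cfg$. It is uniquely integrable by hypothesis, so it generates a continuous local flow, and $\vo(\cfg)$ is a compact asymptotically stable set for it. The tool is the necessary condition of \cite[Thm~3.2]{kvalheim2022necessary} (see also \cite[Thm~1]{kvalheim2025relationships}). It says that if a compact set $A$ is asymptotically stable for a continuous, uniquely integrable vector field $\vt_\uc$ on a manifold $M$, then $\vt_\uc$ restricted to a neighborhood $\nbhdt$ of $A$ minus $A$ is homotopic, through nowhere-vanishing vector fields on $\nbhdt\setminus A$, to the ``inward'' field $-\nabla \mathcal{V}$. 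Here $\mathcal{V}$ is a Lyapunov function, and the homotopy comes from flowing. A standard consequence is an index/Euler characteristic identity: because $A=\vo(\cfg)$ is a compact manifold, homotopy equivalent to $\cfg$, the vector field $\vt_\uc$ near $A$ must have ``degree'' equal to $\chi(A)=\chi(\cfg)\neq 0$, in the sense of a relative obstruction class or Poincaré--Hopf count. The plan is to exhibit instead a nonvanishing extension that contradicts this.

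\textbf{Key step.} Build a homotopy of nonvanishing vector fields on a punctured neighborhood of $\vo(\cfg)$ that moves $\vt_\uc$ to a field with zero index. Off the zeros of $\vo$, i.e.\ over $\cfg\setminus \nbhd$ shrunk slightly, the spray term is nonvertical. Indeed, $\T\pi_{\T\cfg}(\vt_\uc(v))=v$, which is close to $\vo(\pi(v))\ne 0$ for $v$ near $\vo(\cfg)$. So $\vt_\uc$ has a nonzero horizontal component there, and this persists under any homotopy that only alters vertical parts. Over $\nbhd$, use Assumption~\ref{assump:non-stabilization}. The vertical part of $\vt_\uc$ lies in $\vl(v,\dbn)$, and $\vt(\nbhd)\cap\dbn=\varnothing$. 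Define the straight-line homotopy
\[
\vt_\uc + s\,\vl(v,\vt(\pi_{\T\cfg}(v))), \qquad s\in[0,1].
\]
Its vertical component $\cs\circ\uc(v)+s\vt$ vanishes only if $s\vt\in\dbn$, i.e.\ only if $s=0$. At $s=0$ we are at the original field, which is nonzero off $\vo(\cfg)$. When $s>0$ the vertical part is nonzero. One caveat: $\vt$ is nonzero but not required to be in $\dbn$, and $\dbn$ is a subspace, so $\cs\circ\uc(v)+s\vt\in \dbn$ forces $s\vt\in\dbn$, which is impossible. Patch the two regions with a bump function in $\pi_{\T\cfg}(v)$ supported in $\nbhd$ and equal to $1$ near the zeros of $\vo$. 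On the overlap, either the horizontal part $v\approx\vo(q)\neq 0$ or the vertical part is nonzero, so the homotopy stays nonvanishing on a neighborhood of $\vo(\cfg)$, including on $\vo(\cfg)$ itself at the end. The end field is therefore a nowhere-vanishing continuous extension over all of a neighborhood of $\vo(\cfg)$. This contradicts the index identity: an asymptotically stable $A$ forces the field near $A$ to be homotopic on $\nbhdt\setminus A$ to $-\nabla\mathcal{V}$, whose extensions over $\nbhdt$ have total index $\chi(A)=\chi(\cfg)\neq0$, whereas ours extends with no zeros.

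\textbf{Main obstacle.} The main obstacle is making the index statement precise for a merely continuous field and a non-point set $A$. I would take it directly from \cite[Thm~3.2]{kvalheim2022necessary}, which gives exactly that $\chi(A)\neq 0$ obstructs stabilization by vector fields admitting a nonvanishing homotopy as above. The bookkeeping of neighborhood sizes is the remaining work: closeness of $v$ to $\vo(\pi(v))$ must hold on the overlap region, which requires choosing the neighborhood of $\vo(\cfg)$ small relative to $\min_{\cfg\setminus\nbhd'}|\vo|$.
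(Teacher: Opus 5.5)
Your proof is correct, and its core is the paper's Lemma~\ref{lem:adversary-existence}. Your perturbation $s\varphi\,\vl(v,\vt(\pi_{\T\cfg}(v)))$ of the closed-loop field $f\circ\uc$ is, up to sign and the horizontal term $\varepsilon\spray$, the paper's adversary $\varepsilon\vtt$. The nonvanishing check is also the same: $\T\pi_{\T\cfg}(f(\uc_v))=v$ handles everything off the zero section, and on the zero section over $\nbhd$ one has $\cs(\uc_v)\in\dbn$ but $\vt\notin\dbn$.

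The difference is in how the contradiction is drawn. The paper verifies that this field defeats the adversary condition for \emph{every} control value, not just $\uc(v)$. It then invokes the adversary theorem as a black box. You instead fix the feedback and run the index argument that underlies that theorem. You homotope $f\circ\uc$, through fields nonvanishing off $\vo(\cfg)$, to a nowhere-vanishing field. This contradicts the Poincar\'e--Hopf count $(-1)^{\dim\T\cfg}\chi(\vo(\cfg))=\chi(\cfg)\neq0$ for inward-pointing fields.

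Your route makes transparent where $\chi(\cfg)\neq0$ enters. It also needs no smallness, since your homotopy is nonvanishing for all $s\in(0,1]$. To be rigorous for a merely continuous, uniquely integrable field, though, it needs the following:
a smooth Lyapunov function with strictly negative derivative along $f\circ\uc$;
a compact regular sublevel set $N$, on whose boundary the relative index is computed;
the identity $\chi(N)=\chi(\vo(\cfg))$;
the observation that $f\circ\uc\neq0$ on $N\setminus\vo(\cfg)$, since by unique integrability any equilibrium in the basin lies in $\vo(\cfg)$.
The paper's modular route hides all of this inside the cited theorem.

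Also note that \cite[Thm~3.2]{kvalheim2022necessary} is stated as the adversary condition, not as your homotopy statement. However, your computation used only $\cs(\uc_v)\in\dbn$. So the field $-s\varphi\,\vl(\slot,\vt\circ\pi_{\T\cfg})$, with $s$ small on a precompact neighborhood, is itself a defeating adversary. You can therefore cite the theorem verbatim, at which point your proof becomes the paper's.
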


\begin{Rem}\label{rem:zero-euler-char-case}
The assumption that $\cfg$ has nonzero Euler characteristic is indeed necessary for  Theorem~\ref{th:non-stabilizability}, as demonstrated below in Example~\ref{ex:euler-char-necessary}.
Similarly, in forthcoming work we show that, if we are not in the simple situation of Remark~\ref{rem:finite-zeros}, then the existence of $\dbn$ without $\vt$ in Assumption~\ref{assump:non-stabilization} is not sufficient for the conclusion of Theorem~\ref{th:non-stabilizability} to hold.
\end{Rem}

\begin{Ex}\label{ex:non-stabilization}
	Let $\cfg = S^2\subset \R^3$, $\rmo= \rmt$, and $\vo$ be as in Example~\ref{ex:naim-stabilization}.
	However, viewing $\cs$ as an $\R^3$-valued map,  assume now that the first component of $\cs(\uc)$ is constrained to vanish whenever $\q\in S^2$ is near the north and south poles constituting the zero set of $\vo$, so $\cs$ takes values there in the locally-defined smooth regular distribution $\dbn$ %of corank $1$ 
	given by
	$$\dbn_\q = \{v\in \T_{\q}S^2\colon v_1 = 0\}.$$
	Then Assumption~\ref{assump:non-stabilization} holds by Remark~\ref{rem:finite-zeros}, so Theorem~\ref{th:non-stabilizability} implies that $\vo(S^2)\subset \T S^2$ cannot even be made locally asymptotically stable for \eqref{eq:riemannian-cs-closed-loop} for any locally Lipschitz feedback law $\uc \in \sect^0(\pi_{\ctrl})$, let alone globally exponentially stable and more as in Example~\ref{fig:naim-stabilization}.
\end{Ex}

\begin{Ex}\label{ex:euler-char-necessary}
Consider $\cfg = SO(3)\subset \R^{3\times 3}$ with Riemannian metric
\begin{equation}\label{eq:left-invariant-metric}
	\rmo(R \xi^\wedge, R \eta^\wedge) = \ip{\xi}{\mathbb{I} \eta}
\end{equation}
for $R\in SO(3)$ and $\xi,\eta\in \R^3$, where $\mathbb{I} = \text{diag}(I_1,I_2,I_3)\in \R^{3\times 3}$ with $I_1, I_2, I_3 > 0$ and  $(\cdot)^{\wedge}\colon \R^3\to \mathfrak{so}(3)$ denotes the  standard ``hat map'' Lie algebra isomorphism \cite[sec.~1.9]{bloch2015nonholonomic} with inverse $(\cdot)^\vee\colon \mathfrak{so}(3)\to \R^3$.
\iffalse
map
\begin{equation*}
	\xi^\wedge = \begin{pmatrix}
		0 & -\xi_3 & \xi_2 \\
		\xi_3 & 0 & -\xi_1\\
		-\xi_2 & \xi_1 & 0
	\end{pmatrix}.
\end{equation*}
\fi
Letting $\ctrl =SO(3) \times \R^2$, $\pi_{\ctrl}\colon SO(3)\times \R^2\to SO(3)$ be given by $\pi_{\ctrl}(R,u_1,u_2)= R$, and $\cs\colon SO(3)\times\R^2 \to \T SO(3)$ be given using the standard basis  $e_1,e_2,e_3\in \R^3$ by
\begin{equation*}
\cs(R,u_1,u_2) = u_1 R \cdot (\mathbb{I}^{-1}e_1)^{\wedge}  + u_2 R \cdot (\mathbb{I}^{-1}e_2)^\wedge ,
\end{equation*}
the control system \eqref{eq:riemannian-cs} specializes to 
\begin{equation}\label{eq:riemannian-cs-so(3)}
	\nabla_{\dot{R}}\dot{R} = u_1 R \cdot (\mathbb{I}^{-1}e_1)^{\wedge} + u_2 R \cdot (\mathbb{I}^{-1}e_2)^{\wedge}.
\end{equation}

Since the image of $\cs$ is a smooth regular distribution of corank $1$ and $SO(3)$ has Euler characteristic zero, all  hypotheses of Theorem~\ref{th:non-stabilizability} except for the Euler characteristic assumption are satisfied for any nonvanishing $\vo\in \vf^0(SO(3))$. 
However, if $I_1 \neq I_2$, then  the image $\vo(SO(3))\subset \T SO(3)$ of the smooth nonvanishing vector field $\vo(R) = Re_1^{\wedge}$ can be made globally asymptotically stable by substituting a  smooth feedback law $(R,\dot{R})\mapsto (u_1(R, \dot{R}), u_2(R, \dot{R}))$ into \eqref{eq:riemannian-cs-so(3)}, as we explain next.
Thus, the  Euler characteristic assumption is necessary for Theorem~\ref{th:non-stabilizability}. 

To explain, the covariant acceleration $\nabla_{\dot{R}}\dot{R}$ of any $C^2$ curve $t\mapsto R(t)\in SO(3)$ with body velocity $\Omega = (R^{-1}\dot{R})^\vee \in \R^3$ satisfies \cite[Prop.~4]{zefran1998generation}
\begin{equation*}
(R^{-1}\nabla_{\dot{R}}\dot{R})^\vee = \dot{\Omega} + \mathbb{I}^{-1}(\Omega \times (\mathbb{I}\Omega)),
\end{equation*}
so  \eqref{eq:riemannian-cs-so(3)} is equivalent to the   Euler equations for a rigid body with two ``control jets''
\begin{equation}\label{eq:euler-equations}
\begin{split}
\dot{R} &= R \Omega^\wedge\\
\dot{\mathbb{I}\Omega} &=  (\mathbb{I}\Omega)\times \Omega + u_1 e_1 + u_2 e_2
\end{split}
\end{equation}
on $SO(3)\times \R^3 \approx \T SO(3)$.
It is known that $SO(3)\times \{e_1\}$ can be made globally asymptotically stable by substituting a smooth feedback law $\Omega\mapsto (u_1(\Omega), u_2(\Omega))$ into \eqref{eq:euler-equations} if $I_1 \neq I_2$ \cite[sec.~3]{wan1993rotational}, so  $\vo(SO(3))$ is globally asymptotically stable for the closed-loop dynamics resulting from substituting the corresponding smooth feedback law  $(R,\dot{R})\mapsto (u_1(R^{-1}\dot{R}), u_2(R^{-1}\dot{R}))$ into \eqref{eq:riemannian-cs-so(3)}.
\end{Ex}

\iffalse
\begin{Rem}
\MK{ For case of nonzero Euler characteristic where $X = 0$ and hence in particular $Z=X^{-1}(0_{\T\cfg})=\cfg$  is not finite, recall LaSalle argument (see favorited March 18 2026 iPhone photo) for stabilizing zero section with $\vo(\ctrl)\subset \dbn$ for some globally-defined distribution $\dbn$.
It suffices to find a positive-rank $\dbn$ and $\rmo$ such that no $\rmo$ geodesic is everywhere tangent to $\dbn$.
 I think a promising example is $\cfg = S^2 \times S^2$ with $\dbn = \T S^2 \times \{0_{S^2}\}$ and a metric on $\cfg$ such that no geodesic is contained in any slice $S^2 \times \{0\}$.
 But I need to make sure I can construct this metric (not sure if hard?).} 
\end{Rem}
\fi

\section{Proof of Theorem~\ref{th:non-stabilizability}}\label{sec:proof-of-th:non-stabilizability}

\subsection{The adversary theorem}\label{subsec:underactuated-proof-prelims}
To prove Theorem~\ref{th:non-stabilizability}, we will use an extension of Brockett's necessary condition for stabilizability of points in Euclidean spaces \cite[Thm~1.(iii)]{brockett1983asymptotic} to one for stabilizability of general compact  sets in manifolds \cite[Thm~3.2]{kvalheim2022necessary} (see also \cite[Thm~1]{kvalheim2025relationships}).
In this subsection we recall the statement of this  result for the special case of stabilizability of submanifolds.

The result uses a general framework for first-order control systems.
It consists of a \concept{state space} given by a smooth manifold $\st$, a \concept{control space} given by a topological space $\ctrl$ and a $C^0$ map $\cp\colon \ctrl \to \st$, and a \concept{control system} given by a $C^0$ map $f\colon \ctrl \to \T \st$ making the diagram
\begin{equation}\label{eq:f-cd-general}
	\begin{tikzcd}
		\ctrl \arrow[rr,"f"] \arrow[dr,"\cp",swap]& & \T\st \arrow[dl,"\pi_{\T\st}"]\\
		& \st & 
	\end{tikzcd}
\end{equation}
commute. 
(We continue to use our global notations $\ctrl$ and $\pi_{\ctrl}$ in this subsection since no confusion will arise, as the only specific first-order control system considered later in this paper will have $\ctrl$ and $\pi_{\ctrl}$ coinciding with those in \eqref{eq:riemannian-cs}.)

A compact set $\att\subset \st$ is \concept{stabilizable} if there is a \concept{feedback law} $\uc\in \sect^0(\cp)$ for which the \concept{closed-loop vector field} $f\circ \uc$ is uniquely integrable and has $\att$ as an asymptotically stable invariant set. 
Here $f\circ \uc$ is \concept{uniquely integrable} if there is a unique maximal trajectory $t\mapsto x(t)$ satisfying $x(0)=x_0$ for any $x_0\in \st$.

\begin{Rem}\label{rem:uniquely-integrable-first-order}
	Similarly to Remark~\ref{rem:uniquely-integrable-second-order}, $f\circ \uc$ is automatically uniquely integrable if $f\circ u$ is locally Lipschitz (e.g., if $\ctrl$ is a smooth manifold and  $u$, $f$ are locally Lipschitz).
\end{Rem}

\begin{Rem}\label{rem:cont-sys-framework} 
	This framework reduces to the ``bundle picture of control'' \cite[p.~15]{brockett1977control}, \cite[sec.~4.6]{bloch2015nonholonomic} in the special case that $\cp\colon \ctrl \to \st$ is a fiber bundle and $f\colon \ctrl \to \T \st$ is sufficiently smooth.
	If $\cp$ is a trivial (product) bundle $\cp \colon \ctrl = \st \times E\to \st$, then elements of $\ctrl$ are of the form $(x,\uc)$, so the expression ``$f(x,\uc)$'' makes literal sense.
	But in the general framework here, it is more appropriate to instead write expressions such as ``$f(\uc)$'' for $\uc\in \ctrl$ and ``$f\circ \uc$'' for $\uc \in \sect^0(\cp)$, keeping in mind that $\uc\in \ctrl$ contains all information about $x=\cp(\uc)$ (cf. Rem.~\ref{rem:no-redundant-expressions}).
%	This is why the notation $f\circ \uc$ is employed here for feedback laws $\uc\colon \st\to \ctrl$.
\end{Rem}

The result below refers to the following reformulation \cite[sec.~1.4]{kvalheim2025relationships} more closely resembling  \cite[Thm~1.(iii)]{brockett1983asymptotic} of a condition from  \cite[Thm~3.2]{kvalheim2022necessary}.
The terminology is explained by interpreting it as the ability to ``defeat'' ``adversaries'' $\adv$ via the image of $f$ somewhere  intersecting that of $\adv$.

\begin{AdvCond}
For any neighborhood $\nbhd_\att\subset \st$ of $\att\subset \st$ there is a neighborhood $\nbhd_0 \subset \T \st$ of the zero section such that, for any $\nbhd_0$-valued vector field $\adv$ over $\nbhd_\att$, $\adv(x)=f(\uc_x)$ for some $x\in \nbhd_\att$, $\uc_x\in \ctrl_x$.
\end{AdvCond}

The following result is a special case of  \cite[Thm~3.2]{kvalheim2022necessary} (see also \cite[Thm~1]{kvalheim2025relationships}).

\begin{AdvThm}%\label{th:adversary-theorem}
	Let $\att\subset \st$ be homeomorphic to a compact smooth manifold with nonzero Euler characteristic.
    If $\att$ is stabilizable for \eqref{eq:f-cd-general}, then the adversary condition holds.	
\end{AdvThm}

Since $\att$ is homeomorphic to a compact smooth manifold, $\att$ admits a finite triangulation \cite[Thm~II.10.6]{munkres1966elementary} and  the Euler characteristic of $\att$ is the usual finite alternating sum $$\#(\text{vertices})-\#(\text{edges})+\#(\text{faces})-\ldots,$$
an integer. 
However, for the case of a general compact set $\att\subset \st$, the Euler characteristic instead needs to be defined using \v{C}ech(-Alexander-Spanier) cohomology for the more general version of the adversary theorem to hold.
See \cite[sec.~11.2]{kvalheim2025relationships} for a brief overview and \cite[sec.~2]{kvalheim2022necessary} for further details.

\subsection{Proof}\label{subsec:underactuated-proof-lemmas}
In this section we prove two lemmas and then Theorem~\ref{th:non-stabilizability}.
We will use a specific case of the general first-order control system \eqref{eq:f-cd-general} given by the commutative diagram
\begin{equation}\label{eq:f-cd-specific}
	\begin{tikzcd}
		\ctrl \arrow[rr,"f"] \arrow[dr,"\pi_{\ctrl}",swap]& & \T^2\cfg \arrow[dl,"\pi_{\T^2\cfg}"]\\
		& \T \cfg & 
	\end{tikzcd},
\end{equation}
where 
\begin{equation}\label{eq:f-specific-def}
f = \spray\circ \pi_\ctrl + \vl\circ (\pi_{\ctrl},\cs),
\end{equation}
$\spray$ is the geodesic spray of $\rmo$, and $\vl$ is  the vertical lift (section~\ref{sec:preliminaries}).
The following result explains the relationship of \eqref{eq:f-cd-specific}, \eqref{eq:f-specific-def} with the second-order control system \eqref{eq:riemannian-cs}, \eqref{eq:F-cd}.

\begin{Lem}\label{lem:equivalence-between-two-cs} 
	For any $\uc\in \sect^0(\pi_{\ctrl})$, a $C^2$ curve $t\mapsto \q(t)$ in $\cfg$ satisfies \eqref{eq:riemannian-cs-closed-loop} if and only if it satisfies
\begin{equation}\label{eq:non-riemannian-cs-closed-loop} 
	\ddot{\q}=f\circ \uc(\dot{\q}).
\end{equation}	
\end{Lem}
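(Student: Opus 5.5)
The plan is to reduce both equations to a pointwise identity in $\T^2\cfg$ along the lifted curve $t\mapsto \dot{\q}(t)\in\T\cfg$, using the standard decomposition of the second derivative of a curve into its geodesic-spray part and a vertical part. Fix a $C^2$ curve $t\mapsto\q(t)$ and write $v(t)=\dot{\q}(t)$, so that $\ddot{\q}(t)=\dot v(t)\in \T_{v(t)}\T\cfg$. The key identity I will establish first is
\begin{equation*}
	\ddot{\q}(t) = \spray(\dot{\q}(t)) + \vl\bigl(\dot{\q}(t),\nabla_{\dot{\q}}\dot{\q}(t)\bigr),
\end{equation*}
which holds for every $C^2$ curve. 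To justify it, note that $\ddot{\q}(t)-\spray(\dot{\q}(t))$ lies in $\V(\T\cfg)$, because $\T\pi_{\T\cfg}$ maps both terms to $\dot{\q}(t)$: for $\ddot{\q}$ this follows from $\pi_{\T\cfg}\circ v=\q$, and for $\spray$ it is the second-order property of the spray. Hence this difference equals $\vl(\dot{\q},w)$ for a unique $w\in\T_{\q}\cfg$.

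Next I identify $w$ with $\nabla_{\dot{\q}}\dot{\q}$. In local coordinates $(\q^i,v^i)$ we have $\spray(v)=v^i\partial_{\q^i}-\Gamma^i_{jk}(\q)v^jv^k\partial_{v^i}$ and $\vl(v,w)=w^i\partial_{v^i}$. The curve gives $\ddot{\q}=\dot{\q}^i\partial_{\q^i}+\ddot{\q}^i\partial_{v^i}$, so
\begin{equation*}
	w^i=\ddot{\q}^i+\Gamma^i_{jk}\dot{\q}^j\dot{\q}^k=(\nabla_{\dot{\q}}\dot{\q})^i.
\end{equation*}
This is the only real content of the argument, namely matching the spray's vertical component with the Christoffel symbols of the Levi-Civita connection. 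I take this as the standard coordinate description of the geodesic spray, consistent with its defining property that geodesics are the projections of its integral curves, which forces exactly this vertical part.

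With the identity in hand, I unwind \eqref{eq:f-specific-def}. Since $\pi_{\ctrl}\circ\uc=\id$, we get
\begin{equation*}
	f\circ\uc(\dot{\q}) = \spray(\dot{\q})+\vl\bigl(\dot{\q},\cs\circ\uc(\dot{\q})\bigr).
\end{equation*}
Here $\cs\circ\uc(\dot{\q})\in\T_{\q}\cfg$ by the commutative diagram \eqref{eq:F-cd}, so the vertical lift is defined. Therefore \eqref{eq:non-riemannian-cs-closed-loop} holds if and only if $\vl(\dot{\q},\nabla_{\dot{\q}}\dot{\q})=\vl(\dot{\q},\cs\circ\uc(\dot{\q}))$. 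Because $\vl$ restricted to each fiber $\{v\}\times\T_{\q}\cfg$ is a linear isomorphism onto $\V_v(\T\cfg)$, this is in turn equivalent to \eqref{eq:riemannian-cs-closed-loop}, which proves both directions at every time $t$.

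I expect no serious obstacle. The only delicate point is bookkeeping of base points, ensuring all vectors live in the same fiber $\T_{\dot{\q}(t)}\T\cfg$, together with the sign convention in the spray's coordinate formula.
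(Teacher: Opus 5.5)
Your proof is correct and takes the same route as the paper: the paper's one-line proof rests on the identity $\ddot{\q} = \spray(\dot{\q}) + \vl(\dot{\q},\nabla_{\dot{\q}}\dot{\q})$, which you also verify in coordinates, then unwind \eqref{eq:f-specific-def} using $\pi_{\ctrl}\circ\uc=\id$ and conclude by fiberwise injectivity of $\vl$. The details you add (the base-point bookkeeping via \eqref{eq:F-cd} and the coordinate form of $\spray$) are exactly what the paper leaves implicit when it calls the lemma immediate.
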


\begin{proof}
This is immediate from the fact that, for any $C^2$ curve $t\mapsto \q(t)$ in $\cfg$, 
\begin{equation*}\label{eq:q-ddot-decomp}
	\ddot{\q} = \spray(\dot{\q}) + \vl(\dot{\q},\nabla_{\dot{\q}}\dot{\q}).
\end{equation*}
\end{proof}

\begin{Lem}\label{lem:adversary-existence}
For any $\vo\in \vf^0(\cfg)$ satisfying Assumption~\ref{assump:non-stabilization}, there is a neighborhood $\nbhdt\subset \T\cfg$ of $\vo(\cfg)$ and $\vtt\in \vf(\T\cfg)$ such that, for all $\varepsilon \in (0,1)$, $\varepsilon \vtt(\nbhdt)\cap f(\ctrl_{\nbhdt})=\varnothing$.
\end{Lem}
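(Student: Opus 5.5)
The plan is to build $\vtt$ from two pieces. The first is a ``horizontal'' piece, $-\spray$, which beats $f$ wherever $v\neq 0$. The second is a ``vertical'' piece, built from the adversary $\vt$ of Assumption~\ref{assump:non-stabilization}, which beats $f$ on the zero section near the zeros of $\vo$. The neighborhood $\nbhdt$ is then chosen so that every zero vector it contains lies over the region where the vertical piece is fully switched on.

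\emph{Construction of $\vtt$.} Let $Z=\vo^{-1}(0_{\T\cfg})$, which is compact. Choose a compact neighborhood $N$ of $Z$ with $N\subset \nbhd$, and a $C^0$ function $\rho\colon \cfg\to[0,1]$ with $\rho\equiv 1$ on $N$ and $\operatorname{supp}\rho$ a compact subset of $\nbhd$. Then $\rho\vt$, extended by zero outside $\nbhd$, is a $C^0$ vector field on $\cfg$. Define
\begin{equation*}
\vtt(v) = -\spray(v) + \vl\bigl(v,\rho(\pi_{\T\cfg}(v))\,\vt(\pi_{\T\cfg}(v))\bigr),
\end{equation*}
which is a continuous vector field on $\T\cfg$.

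\emph{Construction of $\nbhdt$.} The set $\vo(\cfg\setminus \operatorname{int}N)$ is compact and disjoint from the zero section, since $\vo$ has no zeros off $N$. It therefore has an open neighborhood $\nbhdt_1$ disjoint from the zero section. Set $\nbhdt = \T_{\operatorname{int}N}\cfg\cup \nbhdt_1$. This is open and contains $\vo(\cfg)$, and it has the key property that $0_\q\in\nbhdt$ implies $\q\in\operatorname{int}N$.

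\emph{Verification.} Suppose $v\in\nbhdt$, $\uc\in\ctrl_v$ and $\varepsilon\in(0,1)$ satisfy $f(\uc)=\varepsilon\vtt(v)$; we derive a contradiction in two steps.
\begin{enumerate}
\item Apply $\T\pi_{\T\cfg}$ to both sides. Since $\spray$ is a second-order vector field we have $\T\pi_{\T\cfg}\circ\spray=\id$, and vertical lifts are killed by $\T\pi_{\T\cfg}$. Using \eqref{eq:f-specific-def}, this gives $v=-\varepsilon v$, so $(1+\varepsilon)v=0$ and hence $v=0_\q$.
\item By the choice of $\nbhdt$, we get $\q\in\operatorname{int}N\subset\nbhd$, so $\rho(\q)=1$. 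The spray vanishes on the zero section, since the geodesic through $0_\q$ is constant. The equation therefore reduces to $\vl(0_\q,\cs(\uc))=\varepsilon\,\vl(0_\q,\vt(\q))$. Because $\vl$ is injective on fibers, this means $\cs(\uc)=\varepsilon\vt(\q)$.
\end{enumerate}
Now $\cs(\uc)\in\dbn_\q$ because $\uc\in\ctrl_{\T_\nbhd\cfg}$. Since $\dbn_\q$ is a linear subspace and $\varepsilon>0$, it follows that $\vt(\q)\in\dbn_\q$, contradicting $\vt(\nbhd)\cap\dbn=\varnothing$.

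\emph{Main obstacle.} The delicate point is the bookkeeping in the choice of $\nbhdt$. The cutoff $\rho$ is needed so that $\vtt$ is globally defined, even though $\vt$ exists only on $\nbhd$. But wherever $\rho<1$ the vertical piece is weakened, and wherever $\rho=0$ it vanishes, so $0\in\dbn$ would no longer be excluded. The compactness argument that keeps zero vectors in $\nbhdt$ over $\operatorname{int}N$ is exactly what resolves this. The two remaining ingredients, $\T\pi_{\T\cfg}\circ\spray=\id$ and $\spray|_{0_{\T\cfg}}=0$, are standard facts about the geodesic spray.
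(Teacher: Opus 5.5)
Your proposal is correct and is essentially the paper's proof: a horizontal spray term that forces $v=0_\q$ after applying $\T\pi_{\T\cfg}$, plus a cut-off vertical lift of $\vt$ that cannot be matched on the zero section because $\dbn_\q$ is a linear subspace, with $\nbhdt$ built so that its zero vectors lie where the cutoff is active. The differences are cosmetic. The paper uses $+\spray$, which is why it needs $\varepsilon\neq 1$, and a function $\varphi$ that is positive exactly on $\nbhd$, which suffices because only positivity of the cutoff matters. Your $-\spray$ works for all $\varepsilon>0$, and your compactly supported $\rho$ makes continuity of the extension by zero automatic.
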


\begin{proof}
Let $\nbhd$, $\dbn$, and $\vt$ be as in Assumption~\ref{assump:non-stabilization}.
We may assume that $\nbhd$ is open.
Let $\varphi\colon \cfg \to [0,\infty)$ be any smooth function such that $\varphi^{-1}(0) = \cfg \setminus \nbhd$.	
Define $\vtt_0, \vtt\in \vf^0(\T\cfg)$ by $$\vtt_0(v)=\varphi(\pi_{\T\cfg}(v))\vl(v,\vt(\pi_{\T\cfg}(v)))$$
and $\vtt = \spray + \vtt_0$.
Define $\nbhdt_0 = \pi_{\T \cfg}^{-1}(\nbhd)$, let $\nbhdt_1\subset \T\cfg$ be any open neighborhood of $\vo(\cfg)\setminus \nbhdt_0$ disjoint from $0_{\T\cfg}$, and define $\nbhdt = \nbhdt_0 \cup \nbhdt_1$.

For any $\varepsilon \in (0,1)$, $\varepsilon \spray$ is not equal to $\spray$ anywhere except on $0_{\T\cfg}$.
Since $\varepsilon \spray$ and $\spray\circ \pi_{\ctrl}$ are respectively the horizontal parts of $\vtt$ and $\spray\circ \pi_{\ctrl}$, it follows that
\begin{equation*}
\varepsilon \vtt(\nbhdt) \cap f(\ctrl_{\nbhdt}) \subset \nbhdt \cap 0_{\T\cfg} = \nbhdt_0 \cap 0_{\T \cfg}
\end{equation*}
for any $\varepsilon\in (0,1)$.
Thus, for $\varepsilon \in (0,1)$,
\begin{align*}
\varepsilon \vtt(\nbhdt) \cap f(\ctrl_{\nbhdt})&= \varepsilon \vtt_0(\nbhdt_0\cap 0_{\T\cfg}) \cap f(\ctrl)
\end{align*}
is empty if  $\varepsilon\vt(\nbhd) \cap \cs(\ctrl_{\T_\nbhd \cfg})$ is empty, which is the case since our assumptions imply that
\begin{align*}
\varepsilon\vt(\nbhd) \cap \cs(\ctrl_{\T_\nbhd \cfg})&\subset \varepsilon\vt(\nbhd) \cap	\dbn_\nbhd = \varnothing.
\end{align*}
\end{proof}	

We now prove the second main result.

\begin{proof}[Proof of Theorem~\ref{th:non-stabilizability}]
Fix any $\uc \in \sect^0(\pi_{\ctrl})$.
Lemma~\ref{lem:equivalence-between-two-cs}  implies that \eqref{eq:riemannian-cs-closed-loop} is uniquely integrable if and only if  \eqref{eq:non-riemannian-cs-closed-loop} is, and that $\vo(\cfg)$ is asymptotically stable for \eqref{eq:riemannian-cs-closed-loop} if and only if it is the same for \eqref{eq:non-riemannian-cs-closed-loop}. 
Lemma~\ref{lem:adversary-existence} implies that the adversary condition (section~\ref{subsec:underactuated-proof-prelims}) does not hold for \eqref{eq:f-cd-specific}, \eqref{eq:f-specific-def}, so the adversary theorem (section~\ref{subsec:underactuated-proof-prelims})  implies that $\vo(\cfg)$ is not stabilizable for \eqref{eq:f-cd-specific}, \eqref{eq:f-specific-def}.
In particular, $\vo(\cfg)$ is not asymptotically stable for \eqref{eq:non-riemannian-cs-closed-loop} and hence also not for \eqref{eq:riemannian-cs-closed-loop}.
\end{proof}

\section{Discussion}\label{sec:discussion}
In this paper, we established results determining whether prescribed vector fields on compact smooth manifolds, viewed geometrically as sections of the tangent bundle, can be asymptotically stabilized in a strong sense by general second-order control systems on the base manifold.
We also illustrated the abstract results concretely in Examples~\ref{ex:naim-stabilization}, \ref{ex:non-stabilization} and Figure~\ref{fig:naim-stabilization}.

For a broad class of fully actuated systems, we showed in Theorem~\ref{th:naim-stabilization} that  the section corresponding to any smooth vector field can be made globally exponentially stable, normally attracting, and more, which implies that the closed-loop second-order system asymptotically reproduces the prescribed first-order dynamics in a strong sense.
Moreover, an explicit feedback law to achieve this was provided by extending the classical Koditschek construction \cite[sec.~3.3]{koditschek1987adaptive}  to our geometric setting (see Proposition~\ref{prop:suff-cond-assump:naim-stabilization}).

In contrast, for a broad class of underactuated systems on manifolds of nonzero Euler characteristic, we showed in Theorem~\ref{th:non-stabilizability} that  sections corresponding to ``almost all'' smooth vector fields cannot even be locally asymptotically stabilized.
This includes, in particular, all vector fields having only isolated zeros.
Here the underactuation requirement is that the control system takes values in a $C^0$ regular distribution of positive corank over a neighborhood of the zeros of $\vo$, a common situation for underactuated systems.
We also showed that the Euler characteristic assumption is necessary for Theorem~\ref{th:non-stabilizability} (see Remark~\ref{rem:zero-euler-char-case} and Example~\ref{ex:euler-char-necessary}).

It would be interesting to determine the optimal hypotheses under which the conclusions of Theorems~\ref{th:naim-stabilization} and \ref{th:non-stabilizability} apply.
For the latter, an extension of Coron's necessary condition for stabilizability of points in Euclidean spaces \cite[Thm~2]{coron1990necessary} introduced in \cite[Thm~2]{kvalheim2023obstructions} (see also \cite[Thm~3]{kvalheim2025relationships}) might be useful, since it is stronger than the extension \cite[Thm~3.2]{kvalheim2022necessary} of Brockett's condition \cite[Thm~1.(iii)]{brockett1983asymptotic} used to prove Theorem~\ref{th:non-stabilizability} under fairly general (but not all \cite[Ex.~4]{kvalheim2025relationships}) conditions \cite[Thm~4]{kvalheim2025relationships}.
Additionally, it would be interesting to pursue geometric extensions of other techniques for embedding first-order dynamics in second-order systems distinct from the Koditschek construction \cite{revzen2012dynamical}.

\section*{Acknowledgments}
This material is based upon work supported by the Air Force Office of Scientific Research under award number FA9550-24-1-0299, managed by Dr. Frederick A. Leve.

%\MK{other potentially relevant refs: \cite{welde2023compositional}      \cite{wilson1969smooth,fathi2019smoothing} \cite{gobbino2001topological,jongeneel2024topological} \cite{milnor1965topology,pugh1968generalized, guillemin1974differential}}

\bibliographystyle{amsalpha}
\bibliography{ref}

\end{document}